 \newtheorem{thm}{Theorem}[section]
 \newtheorem{cor}[thm]{Corollary}
 \newtheorem{lem}[thm]{Lemma}
 \theoremstyle{definition}
 \theoremstyle{remark}
 \numberwithin{equation}{section}
\begin{document}

\title[Remarks on the tensor degree of finite groups] {Remarks on the tensor degree of finite groups}

\author[A.M.A. Alghamdi]{Ahmad M.A. Alghamdi}
\address{Department of Mathematical Sciences, Faculty of Applied Sciences\\
Umm Alqura University, P.O. Box 14035, Makkah, 21955, Saudi Arabia}
\email{amghamdi@uqu.edu.sa}

\author[F.G. Russo]{Francesco G. Russo}
\address{DEIM, Universit\'a degli Studi di Palermo\\
Viale Delle Scienze, Edificio 8, 90128, Palermo, Italy}
\email{francescog.russo@yahoo.com}

\subjclass[2010]{Primary: 20J99, 20D15;  Secondary:  20D60; 20C25}
\keywords{Tensor degree, commutativity degree, exterior degree}

\date \today

%\dedicatory{To Geoffrey Robinson, for his 60th birthday}

\maketitle

\begin{abstract} The present paper is a note on the tensor  degree  of  finite groups,  introduced recently in literature. This  numerical invariant generalizes the  commutativity degree through the notion of nonabelian tensor square. We show two inequalities,  which correlate the tensor and the commutativity degree of finite groups, and, indirectly,  structural properties will be discussed. 
\end{abstract}

\section{The relative tensor degree}

All the groups of the present paper are supposed to be finite.  Having in mind the exponential notation for the conjugation of two elements $x$ and $y$ in a group $G$, that is, the notation $x^y=y^{-1}xy$, we may follow  \cite{bjr, brownbook, ort} in saying that  two normal subgroups $H$ and $K$ of  $G$  \textit{act compatibly} upon each other, if
\[\left(h_2^{k_1}\right)^{h_1} = \left(\left({h_2}^{h_1^{-1}}\right)^{k_1}\right)^{h_1} \ \mathrm{and} \ \left(k_2^{h_1}\right)^{k_1} = \left(\left({k_2}^{k_1^{-1}}\right)^{h_1}\right)^{k_1}\]
for all $h_1,h_2 \in H$ and $k_1,k_2 \in K$, and if $H$ and $K$ act upon themselves
by conjugation. Given $h \in H$ and $k \in K$, the  nonabelian tensor product $H \otimes K$  is  the group generated by the symbols $h \otimes k$ satisfying the relations $h_1h_2\otimes k_1=(h_2^{h_1}\otimes {k_1}^{h_1}) \ (h_1\otimes k_1)$ and $k_1k_2\otimes h_1=(k_1\otimes h_1) \ (h_1^{k_1} \otimes k_2^{k_1})$ for all  $h_1, h_2 \in H$ and $k_1, k_2 \in K$. The map \[\kappa_{H,K} : h\otimes k \in H \otimes K \mapsto [h,k] \in [H,K]\] turns out to be an epimorphism, whose kernel $\ker \kappa_{H,K} =J(G,H,K)$ is central in $H \otimes K$. The reader may find more details and a topological approach to $J(G,H,K)$ in \cite{brownbook, ellis, prf, ort}. The short exact sequence  \[\begin{CD}
1 @>>>J(G,H,K) @>>> H \otimes K @>\kappa_{H,K}>> [H,K]@>>>1 \end{CD}\] is a central extension. In the special case $G=H=K$, we have that  $J(G)=J(G,G,G)=\ker \kappa_{G,G}= \ker \kappa$ and $H \otimes K = G \otimes G$ is called nonabelian tensor square of $G$. The fundamental properties of $G \otimes G$ have been described in the classical paper  \cite{bjr},  in which it is noted that $\kappa : x\otimes y \in G\otimes G \mapsto \kappa(x\otimes y)=[x,y] \in G'$
is an epimorphism of groups with  $\ker \kappa =J(G)$ and $1 \rightarrow J(G) \rightarrow G \otimes G {\overset{\kappa}\rightarrow} G' \rightarrow 1$ is a central extension. The group  $J(G)$ is important from the perspective of the algebraic topology, in fact  $J(G)\cong \pi_3(SK(G,1))$ is the third homotopy group  of the suspension of an Eilenberg--MacLane space $K(G,1)$ (see \cite{brownbook} for more details).

As done in \cite{pf}, we may consider  the \textit{tensor centralizer}  \[C^\otimes_K(H)=\{k \in K \ | \ h \otimes k=1,  \  \forall h \in H\}=\bigcap_{h \in H} C^\otimes_K(h)\] and the \textit{tensor center} $C^\otimes_G(G)=Z^\otimes(G)={\underset{x \in G}\bigcap} C_G^\otimes(x)$ and one can check that $C^\otimes_G(x)$ and $Z^\otimes(G)$ are subgroups of $G$ such that $C^\otimes_G(x) \subseteq C_G(x)$ and $Z^\otimes(G) \subseteq Z(G)$.

Generalizing what has been done in \cite{pf}, we  may define the \textit{relative tensor degree}  \[d^\otimes(H,K)=\frac{|\{(h,k)\in H \times K \ | \  h \otimes k =1\}|}{|H||K|}= \frac{1}{|H| \ |K|} \sum_{h \in H} |C^\otimes_K(h)|\] of $H$ and $K$.  Notice that  $d^\otimes(G)=1$ if and only if $Z^\otimes(G)=G$. Unfortunately, few results  are available on the relative tensor degree at the moment and these are contained mainly in \cite{pf}, where it is discussed the \textit{tensor degree} $d^\otimes(G)=d^\otimes(G,G)$ of $G$. On the other hand, there is a rich literature  (see for instance \cite{alghamdi, amit, gustafson, l1, l2}) on   the \textit{relative commutativity degree} \[d(H,K)=\frac{|\{(h,k)\in H \times K \ | \  [h,k] =1\}|}{|H||K|}= \frac{1}{|H| \ |K|} \sum_{h \in H} |C_K(h)|=\frac{k_K(H)}{|H|}\]  of $H$ and $K$ (not necessarily normal this time) of $G$.  Here $k_K(H)$ is the number of $K$--conjugacy classes that constitute $H$.   In particular, if $G=H=K$,  we find the well known \textit{commutativity degree} $d(G)=d(G,G)=k_G(G)/|G|$.   Our fist main contribution is the following.

\begin{thm}\label{boundrelative}Let $H,K$ be two normal subgroups of a group $G$ and $p$  the smallest prime divisor of $|G|$. Then the following inequalities are true:
\[\frac{d(H,K)}{|J(G,H,K)|}+\frac{|C^\otimes_K(H)|}{|H|} \left(1-\frac{1}{|J(G,H,K)|}\right) \le d^\otimes(H,K) \leqno{(a)}\]
\[d^\otimes(H,K)\le d(H,K)-\left(1-\frac{1}{p}\right)\left(\frac{|C_K(H)|-|C^\otimes_K(H)|}{|H|}\right). \leqno{(b)}\]
\end{thm}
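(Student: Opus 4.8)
The plan is to obtain both inequalities from one piece of bookkeeping over $H\times K$. Put
\[ P=\{(h,k)\in H\times K : [h,k]=1\}, \qquad P^{\otimes}=\{(h,k)\in H\times K : h\otimes k=1\}, \]
so that $|P|=|H|\,|K|\,d(H,K)$ and $|P^{\otimes}|=|H|\,|K|\,d^{\otimes}(H,K)$. Since $\kappa_{H,K}(h\otimes k)=[h,k]$, the relation $h\otimes k=1$ forces $[h,k]=1$; hence $P^{\otimes}\subseteq P$ and $d^{\otimes}(H,K)\le d(H,K)$, and $(a)$, $(b)$ amount to quantifying this inclusion on two different slices of $P$. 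I shall use, as in \cite{pf} (transferred to the relative setting in the obvious way), that for each $k\in K$ the tensor centralizer $C^{\otimes}_{H}(k)=\{h\in H : h\otimes k=1\}$ is a subgroup of $C_{H}(k)$ (and symmetrically $C^{\otimes}_{K}(h)\le C_{K}(h)$), and that, since $J(G,H,K)=\ker\kappa_{H,K}$ sits in the central extension $1\to J(G,H,K)\to H\otimes K\to [H,K]\to 1$, the index $[\,C_{H}(k):C^{\otimes}_{H}(k)\,]$ divides $|J(G,H,K)|$; in particular $|C^{\otimes}_{H}(k)|\ge |C_{H}(k)|/|J(G,H,K)|$. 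Note also that $C^{\otimes}_{K}(H)\subseteq C_{K}(H)$ and that $C^{\otimes}_{H}(k)=H$ exactly when $k\in C^{\otimes}_{K}(H)$.

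To prove $(b)$ I would restrict to the pairs $(h,k)$ with $k\in C_{K}(H)$, all of which lie in $P$. For such a $k$ one has $C_{H}(k)=H$, and $C^{\otimes}_{H}(k)$ is a subgroup of $H$ which equals $H$ exactly when $k\in C^{\otimes}_{K}(H)$; for $k\in C_{K}(H)\setminus C^{\otimes}_{K}(H)$ it is a proper subgroup, so $[\,H:C^{\otimes}_{H}(k)\,]\ge p$ (the index divides $|G|$) and hence $|H|-|C^{\otimes}_{H}(k)|\ge |H|\bigl(1-\tfrac1p\bigr)$. Counting the members of $P\setminus P^{\otimes}$ contained in this slice,
\[ |P|-|P^{\otimes}|\ \ge\ \sum_{k\in C_{K}(H)}\bigl(|H|-|C^{\otimes}_{H}(k)|\bigr)\ \ge\ |H|\Bigl(1-\tfrac1p\Bigr)\bigl(|C_{K}(H)|-|C^{\otimes}_{K}(H)|\bigr), \]
and dividing by $|H|\,|K|$ yields the inequality of part $(b)$.

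For $(a)$ I would instead evaluate $|P^{\otimes}|$ as $\sum_{k\in K}|C^{\otimes}_{H}(k)|$ and split the sum at $C^{\otimes}_{K}(H)$: the terms with $k\in C^{\otimes}_{K}(H)$ contribute $|H|$ each, while for $k\notin C^{\otimes}_{K}(H)$ one has $|C^{\otimes}_{H}(k)|\ge |C_{H}(k)|/|J(G,H,K)|$, so
\[ |P^{\otimes}|\ \ge\ |C^{\otimes}_{K}(H)|\,|H|\ +\ \frac{1}{|J(G,H,K)|}\sum_{k\in K\setminus C^{\otimes}_{K}(H)}|C_{H}(k)|. \]
Because $\sum_{k\in K}|C_{H}(k)|=|P|$, and because every $k\in C^{\otimes}_{K}(H)$ centralizes $H$ (so $\sum_{k\in C^{\otimes}_{K}(H)}|C_{H}(k)|=|C^{\otimes}_{K}(H)|\,|H|$), the remaining sum equals $|P|-|C^{\otimes}_{K}(H)|\,|H|$. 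Substituting, dividing by $|H|\,|K|$, and rearranging produces $(a)$.

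The step I expect to be the genuine obstacle is the structural fact that $[\,C_{H}(k):C^{\otimes}_{H}(k)\,]$ divides $|J(G,H,K)|$, equivalently $|C^{\otimes}_{H}(k)|\ge |C_{H}(k)|/|J(G,H,K)|$: this is the point where the central extension $1\to J(G,H,K)\to H\otimes K\to [H,K]\to 1$ enters essentially, via the map $C_{H}(k)\to J(G,H,K)$, $h\mapsto h\otimes k$, whose image lands in $J(G,H,K)$ because $[h,k]=1$. By contrast the fact that $C^{\otimes}_{H}(k)$ is a subgroup and the elementary remark that a proper subgroup of $H$ has index at least $p$ are routine, so once the $|J(G,H,K)|$-index bound is secured the remainder of $(a)$ and $(b)$ is arithmetic.
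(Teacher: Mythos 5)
Your counting scheme is sound, and it is essentially the mirror image of the paper's argument: the paper first proves (Lemma \ref{technicallemma}) that $d^\otimes(H,K)=\frac{1}{|H|}\sum_{i=1}^{k_K(H)}|C^\otimes_K(h_i)|/|C_K(h_i)|$, i.e.\ it slices $H\times K$ by the $K$-conjugacy classes of $H$ and then isolates the singleton classes $\{h\}$ with $h\in C_H(K)$, whereas you slice by the second coordinate $k\in K$ and isolate the fibres over $C_K(H)$. The structural input is the same in both: the map $h\mapsto h\otimes k$ (resp.\ $k\mapsto k\otimes h_i$) embeds $C_H(k)/C^\otimes_H(k)$ into $J(G,H,K)$, whence the index bound by $|J(G,H,K)|$ for part $(a)$ and the ``proper subgroup has index at least $p$'' bound for part $(b)$. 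One shared caveat: the homomorphism property of that map needs the diagonal action to fix $\ker\kappa_{H,K}$ pointwise, which the paper secures only under the extra hypothesis $G=HK$ (see the second half of Lemma \ref{technicallemma}); neither the theorem statement nor your write-up records this hypothesis, and the paper's own proof of $(a)$ quietly uses it too.

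The genuine problem is in the last line of each of your two computations: summing over $k\in K$ and dividing by $|H|\,|K|$ produces
\[\frac{d(H,K)}{|J(G,H,K)|}+\frac{|C^\otimes_K(H)|}{|K|}\Bigl(1-\tfrac{1}{|J(G,H,K)|}\Bigr)\le d^\otimes(H,K)\le d(H,K)-\Bigl(1-\tfrac{1}{p}\Bigr)\frac{|C_K(H)|-|C^\otimes_K(H)|}{|K|},\]
with $|K|$ in the denominators, not $|H|$ as in the statement, so ``dividing by $|H||K|$ yields the inequality of part $(b)$'' is not true as written. This cannot be repaired, because the printed inequalities $(a)$ and $(b)$ --- which pair subgroups of $K$ with the normalisation $1/|H|$ --- are actually false: take $G=K=Q_8$ and $H=Z(Q_8)$, so that both conjugation actions are trivial and $H\otimes K\cong\mathbb{Z}_2\otimes_{\mathbb{Z}}(\mathbb{Z}_2\times\mathbb{Z}_2)$; then $d(H,K)=1$, $d^\otimes(H,K)=5/8$, $|C_K(H)|=8$, $|C^\otimes_K(H)|=2$, $|J(G,H,K)|=4$, $p=2$, and $(b)$ would assert $5/8\le 1-\frac12\cdot\frac{8-2}{2}=-\frac12$, while $(a)$ would assert $1\le 5/8$. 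What the paper's own proof in fact establishes is the version with $C_H(K)$ and $C^\otimes_H(K)$ (subgroups of $H$, namely the singleton $K$-classes) over $|H|$; your version with $C_K(H)$, $C^\otimes_K(H)$ over $|K|$ is its $H\leftrightarrow K$ mirror and is equally valid (in the $Q_8$ example both sides of your $(b)$ equal $5/8$). So your argument proves a correct theorem, but not the theorem as printed: you must either swap the centralizers or swap the denominators, and you should say explicitly which corrected statement you are proving.
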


On the other hand,  we may  correlate the relative tensor degree, the relative commutativity degree and another notion, studied recently in \cite{prf}. In order to proceed in this direction, we recall from \cite{bjr, ellis, niru} that the nonabelian exterior product  $H \wedge K$ of $H$ and $K$ is the quotient the nonabelian tensor produc $H \otimes K$, defined by $H \wedge K= (H \otimes K)/ \nabla(H\cap K)=\langle (x \otimes y) \nabla(H \cap K) \ | \ x,y \in H \cap K \rangle=\langle x \wedge y \ | \ x,y \in H \cap K \rangle$, where $\nabla(H \cap K)= \langle x \otimes  x | \ x \in H \cap K\rangle$. From \cite{bjr, brownbook},  we may note that   \[\kappa'_{H,K} : h\wedge k \in H\wedge K \mapsto \kappa'_{H,K}(h\wedge k)=[h,k] \in [H,K]\] is an epimorphism of groups such that \[\begin{CD}
1 @>>>M(G,H,K) @>>> H \wedge K @>\kappa'_{H,K}>> [H,K]@>>>1 \end{CD}\]  is a central extension,  where $M(G,H,K)= \ker \kappa'_{H,K}$ is the so--called \textit{Schur multiplier of the triple} $(G,H,K)$.  We inform the reader that several references on the theory of the Schur multipliers of triples can be found in \cite{brownbook,  prf}. In  particular, $ M(G,G,G)=M(G)=H_2(G,\mathbb{Z})$ is the  \textit{Schur multiplier} of $G$, that is, the second integral homology group over $G$.  

In our situation, it is possible to consider the set 
\[C^\wedge_K(H)=\{k \in K \ | \ h \wedge k=1,  \  \forall h \in H\}=\bigcap_{h \in H} C^\wedge_K(h),\] called  \textit{exterior centralizer} of $H$ with respect to $K$ and it is actually a subgroup of $K$ (see  \cite{niru} for details). In particular, $C^\wedge_G(G)=Z^\wedge(G)=\bigcap_{x \in G} C^\wedge_G(x) $ is called  \textit{exterior center} of  $G$. It is easy to  check  that $C^\wedge_G(x) \subseteq C_G(x)$ and $Z^\wedge(G) \subseteq Z(G)$.

Some recent papers as \cite{prf} show  that it is possible to have a combinatorial approach for measuring how far a group $G$ is from $Z^\wedge(G)$ and  this is interesting, because  a result of Ellis \cite{ellis}  characterize a capable groups by the triviality of its exterior center  (i.e.: a group $G$ is  \textit{capable} if $G\simeq E/Z(E)$ for a given group $E$).    This aspect has motivated the notion of \textit{relative exterior degree}  \[d^\wedge(H,K)=\frac{|\{(h,k)\in H \times K \ | \  h \wedge k =1\}|}{|H||K|}= \frac{1}{|H| \ |K|} \sum_{h \in H} |C^\wedge_K(h)|\]  of $H$ and $K$. When $G=H=K$, we find the \textit{exterior degree} $d^\wedge(G,G)=d^\wedge(G)$ of $G$ in \cite{prf}. It is  easy to prove  that $d^\wedge(G)=1$ if and only if $G=Z^\wedge(G)$. Hence the exterior degree represents the probability that two randomly chosen elements commute with respect to the operator $\wedge$. Roughly speaking, this means that there are many chances of finding  capable groups  for small values of  exterior degree.

From \cite[Theorem 2.8]{pf}, we may correlate the above notions via the inequality
\[d^\otimes(G) \le d^\wedge(G) \le d(G)\]
and our second main theorem shows that something of more general holds.

\begin{thm}\label{inequalities}
Let $H,K$ be normal subgroups of a group $G$. Then   \[d^\otimes(H,K) \le d^\wedge(H,K) \le d(H,K).\]
Moreover, if $J(G,H,K)$ is trivial, then $d^\otimes(H,K)=d^\wedge(H,K)=d(H,K)$.
\end{thm}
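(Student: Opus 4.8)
The plan is to reduce both inequalities to a single chain of inclusions among centralizers of a \emph{fixed} element and then to average over that element. Writing each of the three degrees in its summatory form, as in the definitions recalled above, the claimed inequality $d^\otimes(H,K)\le d^\wedge(H,K)\le d(H,K)$ follows once we establish that for every $h\in H$ one has
\[C^\otimes_K(h)\ \subseteq\ C^\wedge_K(h)\ \subseteq\ C_K(h),\]
since then $|C^\otimes_K(h)|\le|C^\wedge_K(h)|\le|C_K(h)|$ for each $h$, and summing over $h\in H$ and dividing by $|H|\,|K|$ gives the result.

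To prove the two inclusions I would argue as follows. For $C^\wedge_K(h)\subseteq C_K(h)$, use the epimorphism $\kappa'_{H,K}\colon h\wedge k\mapsto[h,k]$ recalled above: if $h\wedge k=1$ then $[h,k]=\kappa'_{H,K}(h\wedge k)=1$, so $k\in C_K(h)$. For $C^\otimes_K(h)\subseteq C^\wedge_K(h)$, use the canonical projection $H\otimes K\twoheadrightarrow H\wedge K=(H\otimes K)/\nabla(H\cap K)$, which carries $h\otimes k$ to $h\wedge k$; hence $h\otimes k=1$ forces $h\wedge k=1$, i.e.\ $k\in C^\wedge_K(h)$. (Equivalently, one observes that $\kappa_{H,K}$ factors as this projection followed by $\kappa'_{H,K}$.)

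For the last assertion, suppose $J(G,H,K)=1$. Since $\kappa_{H,K}$ is always surjective with kernel $J(G,H,K)$, it is now an isomorphism $H\otimes K\cong[H,K]$. Thus, for $h\in H$ and $k\in K$, the relation $[h,k]=1$ gives $h\otimes k\in\ker\kappa_{H,K}=1$, so $h\otimes k=1$ and $k\in C^\otimes_K(h)$; that is, $C_K(h)\subseteq C^\otimes_K(h)$. Combined with the inclusions of the previous paragraph this yields $C^\otimes_K(h)=C^\wedge_K(h)=C_K(h)$ for every $h\in H$, whence $d^\otimes(H,K)=d^\wedge(H,K)=d(H,K)$. (Alternatively, the same conclusion is immediate from Theorem~\ref{boundrelative}(a): putting $|J(G,H,K)|=1$ collapses its left-hand side to $d(H,K)$, which together with the already proved chain forces equality.)

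I expect the argument to be essentially bookkeeping; the only point that genuinely needs care is the compatibility of the maps $H\otimes K\to H\wedge K\to[H,K]$ with $\kappa_{H,K}$ — that the triangle commutes and that $\nabla(H\cap K)\subseteq\ker\kappa_{H,K}$ — but this is exactly what the defining presentations of $H\otimes K$ and $H\wedge K$ recorded above (following \cite{bjr,brownbook,niru}) guarantee. No other obstacle is anticipated; in particular, the normality of $H$ and $K$ enters only to ensure that the two subgroups act compatibly on each other, so that $H\otimes K$ and $H\wedge K$ are defined in the first place.
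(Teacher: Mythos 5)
Your argument is correct, and it reaches the conclusion by a slightly more elementary route than the paper. The paper funnels both inequalities through Lemma~\ref{technicallemma}: it rewrites each degree as $\tfrac{1}{|H|}\sum_{i=1}^{k_K(H)}|C^{\otimes}_K(h_i)|/|C_K(h_i)|$ (and the analogous expressions with $\wedge$ and with plain centralizers) over $K$-conjugacy class representatives, and then compares the ratios termwise using exactly the chain $C^\otimes_K(h)\subseteq C^\wedge_K(h)\subseteq C_K(h)$ that you prove. Your observation that the raw summatory definitions $\tfrac{1}{|H|\,|K|}\sum_{h\in H}|\cdot|$ already suffice makes the conjugacy-class decomposition unnecessary here, so you bypass the lemma entirely; the underlying idea (the inclusion of the three centralizers, justified via the factorization of $\kappa_{H,K}$ through $H\wedge K$) is the same as the paper's. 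Where your write-up genuinely improves on the paper is the equality statement: the paper derives it from Corollary~\ref{conseq}, whose stated hypothesis is $G=HK$ --- a hypothesis that does not appear in Theorem~\ref{inequalities} --- whereas you obtain it directly from the fact that $J(G,H,K)=\ker\kappa_{H,K}=1$ makes $\kappa_{H,K}$ an isomorphism, so $[h,k]=1$ forces $h\otimes k=1$ and all three centralizers coincide for every $h$. That argument uses no auxiliary hypothesis and quietly repairs a small mismatch in the paper's own deduction. The two compatibility points you flag (that $H\otimes K\to H\wedge K\to[H,K]$ composes to $\kappa_{H,K}$, and that $\nabla(H\cap K)\subseteq\ker\kappa_{H,K}$ because $\kappa_{H,K}(x\otimes x)=[x,x]=1$) are indeed the only places needing care, and they are guaranteed by the presentations recalled in Section~1.
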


\section{Proofs of the main results}

We begin with a technical lemma, whose proof uses an argument which appears in \cite[Lemma 2.1]{prf} and \cite[Lemma 2.2]{pf}  in  different ways.

\begin{lem} \label{technicallemma}
Let $H,K$ be  normal subgroups of a group $G$. Then
\[d^\otimes (H,K)=  \frac{1}{|H|} \sum^{k_K(H)}_{i=1}  \frac{|C^\otimes_K(h_i)|}{|C_K(h_i)|}.\]
In particular, if $G=HK$, then $C_K(h_i)/C^\otimes_K(h_i)$ is isomorphic to a subgroup of $J(G,H,K)$ and $|C_G(h_i):C^\otimes_G(h_i)| \le |J(G,H,K)|$ for all $i=1,2, \ldots, k_K(H)$.
\end{lem}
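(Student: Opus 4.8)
The plan is to compute $d^\otimes(H,K)$ by organizing the sum $\sum_{h\in H}|C^\otimes_K(h)|$ according to $K$-conjugacy classes in $H$. First I would observe that if $h' = h^k$ for some $k\in K$, then conjugation by $k$ induces an isomorphism on the tensor product side (since $K$ acts compatibly and the tensor product is functorial with respect to the compatible actions), carrying $C^\otimes_K(h)$ onto $C^\otimes_K(h')$; likewise it carries $C_K(h)$ onto $C_K(h')$. Hence the ratio $|C^\otimes_K(h)|/|C_K(h)|$ is constant on each $K$-class of $H$. Let $h_1,\dots,h_{k_K(H)}$ be representatives of the $K$-classes in $H$; the class of $h_i$ has size $|K:C_K(h_i)|$. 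Then
\[
\sum_{h\in H}|C^\otimes_K(h)| = \sum_{i=1}^{k_K(H)} \frac{|K|}{|C_K(h_i)|}\,|C^\otimes_K(h_i)|,
\]
and dividing by $|H|\,|K|$ gives exactly the claimed formula $d^\otimes(H,K)=\frac{1}{|H|}\sum_{i=1}^{k_K(H)}\frac{|C^\otimes_K(h_i)|}{|C_K(h_i)|}$.

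For the second assertion, I would fix $h = h_i$ and analyze the map $k \mapsto h\otimes k$ restricted to $C_K(h)$. The point is that the defining relations of $H\otimes K$, when $[h,k]=1$, force $k\mapsto h\otimes k$ to be a homomorphism from $C_K(h)$ into $H\otimes K$ whose kernel is precisely $C^\otimes_K(h)$; so $C_K(h)/C^\otimes_K(h)$ embeds into $H\otimes K$. Using $\kappa_{H,K}(h\otimes k)=[h,k]=1$ for $k\in C_K(h)$, the image actually lies in $J(G,H,K)=\ker\kappa_{H,K}$. When $G = HK$, one checks $C_G(h_i)=C_H(h_i)\,C_K(h_i)$ modulo the obvious care, and the same argument with $G$ in place of $K$ shows $C_G(h_i)/C^\otimes_G(h_i)$ embeds into $J(G,H,K)$, giving $|C_G(h_i):C^\otimes_G(h_i)|\le |J(G,H,K)|$.

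The step requiring the most care is verifying that $k\mapsto h\otimes k$ is genuinely a homomorphism on $C_K(h)$: this is where the relation $k_1k_2\otimes h = (k_1\otimes h)(h^{k_1}\otimes k_2^{k_1})$ (rewritten for $h\otimes k_1k_2$) must be unwound using $[h,k_1]=1$, so that $h^{k_1}=h$ and the cross terms collapse to $(h\otimes k_1)(h\otimes k_2)$ up to the centrality of $J(G,H,K)$ in $H\otimes K$. I expect this to be the main obstacle, since it needs the precise interplay of the two families of tensor relations together with the fact that $\ker\kappa_{H,K}$ is central; once it is in place, the embedding $C_K(h)/C^\otimes_K(h)\hookrightarrow J(G,H,K)$ and the index bound follow immediately, and the identification $C_G(h_i)=C_H(h_i)C_K(h_i)$ under $G=HK$ is routine.
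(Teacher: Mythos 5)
Your proposal matches the paper's proof essentially step for step: the same decomposition of $\sum_{h\in H}|C^\otimes_K(h)|$ over the $K$-classes of $H$ (you in fact justify the constancy of $|C^\otimes_K(h)|$ on each class more explicitly than the paper does), and the same embedding of $C_K(h_i)/C^\otimes_K(h_i)$ into $J(G,H,K)$ via $k\mapsto h_i\otimes k$, with the cross term in the tensor relation collapsing because the image lies in the central kernel $J(G,H,K)$. The only divergence is at the final index bound, where the paper simply concludes $|C_K(h_i):C^\otimes_K(h_i)|\le|J(G,H,K)|$ (the subscript $G$ in the statement appears to be a typo for $K$), so your extra step identifying $C_G(h_i)$ with $C_H(h_i)C_K(h_i)$ is not needed.
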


\begin{proof} Since $H$ is normal in $G$, we  consider  the $K$--conjugacy classes $C_1, \ldots, C_{k_K(H)}$ that constitute $H$. It follows that
\[|H| \ |K| \ d^\otimes (H,K)=  \sum_{h\in H} |C^\otimes_K(h)|=\sum^{k_K(H)}_{i=1} \sum_{h \in C_i} |C^\otimes_K(h)|\]
\[=\sum^{k_K(H)}_{i=1} |K:C_K(h_i)| \ |C^\otimes_K(h_i)|=|K| \ \sum^{k_K(H)}_{i=1}   \frac{|C^\otimes_K(h_i)|}{|C_K(h_i)|}.\]
Now assume that $G=HK$. For all $i=1,\ldots, k_K(H)$,
the map   \[\varphi: kC^\otimes_K(h_i) \in C_K(h_i)/C^\otimes_K(h_i) \longmapsto k \otimes h_i \in J(G,H,K)\]
satisfies the condition \[\varphi (k_1k_2C^\otimes_K(h_i))=k_1k_2 \otimes h_i = (k_1 \otimes h_i)^{k_2} \ (k_2 \otimes h_i)\]\[=(k_1 \otimes h_i) \ (k_2 \otimes h_i)=\varphi(k_1C^\otimes_K(h_i)) \ \varphi(k_2C^\otimes_K(h_i))\] for all $k_1,k_2 \in C_K(h_i)$. Furthermore,  $\ker \varphi = \{kC^\otimes_K(h_i) \ | \ k \otimes h_i = 1\}=C^\otimes_K(h_i)$. Then $\varphi$ is a monomorphism and  $C_K(h_i)/C^\otimes_K(h_i)$ is isomorphic to a subgroup of $J(G,H,K)$. We conclude that $|C_K(h_i):C^\otimes_K(h_i)| \le |J(G,H,K)|$.
\end{proof}

Now we may prove Theorem \ref{boundrelative}. It is an interesting bound, which connects the notion of relative tensor degree with that of relative commutativity degree.

\begin{proof}[Proof of Theorem \ref{boundrelative}]  We begin to prove $(a)$. From Lemma \ref{technicallemma}, we have \[|C^\otimes_K(H)|/|C_K(H)| \ge 1/|J(G,H,K)|\] and, together with the equality $d(H,K)=\frac{k_K(H)}{|H|}$, we deduce
\[d^\otimes(H,K)=\frac{1}{|H|} \sum^{k_K(H)}_{i=1}\left| \frac{C^\otimes_K(h_i)}{C_K(h_i)}\right|\ge \frac{1}{|H|} \left( |C^\otimes_K(H)|+\frac{k_K(H)-|C^\otimes_K(H)|}{|J(G,H,K)|}\right)\]
\[=\frac{k_K(H)}{|H| \ |J(G,H,K)|}+\frac{|C^\otimes_K(H)|}{|G|}\left(1-\frac{1}{ |J(G,H,K)|}\right)\]
\[=\frac{d(H,K)}{|J(G,H,K)|}+\frac{|C^\otimes_K(H)|}{|H|} \left(1-\frac{1}{|J(G,H,K)|}\right)\]

Conversely, $|K:C^\otimes_K(h_i)| \ge p$ implies that
\[d^\otimes(H,K)=\frac{1}{|H|} \sum^{k_K(H)}_{i=1}\left| \frac{C^\otimes_K(h_i)}{C_K(h_i)}\right|\]
\[\le \frac{|C^\otimes_K(H)|}{|H|}+ \frac{1}{p} \left(\frac{|C_K(H)|-|C^\otimes_K(H)|}{|H|}\right)+\frac{k_K(H)-|C_K(H)|}{|H|}\]
\[=d(H,K)-\frac{p-1}{p}\left(\frac{|C_K(H)|-|C^\otimes_K(H)|}{|H|}\right).\]
\end{proof}

Immediately, we note that \cite[Theorem 2.3]{pf} describes a special case of Theorem \ref{boundrelative}. The following consequence of Theorem \ref{boundrelative} is  interesting, too.

\begin{cor}\label{conseq}Let $G=HK$ for two normal subgroups $H$ and $K$. Then
\[ \frac{d(H,K)}{|J(G,H,K)|}\le  d^\otimes(H,K) \le   \ d(H,K).\]
In particular, if $J(G,H,K)$ is trivial, then $d^\otimes(H,K)=d(H,K).$
\end{cor}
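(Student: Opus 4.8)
The plan is to derive Corollary \ref{conseq} as a direct specialization of Theorem \ref{boundrelative} under the extra hypothesis $G=HK$. The key observation, already recorded in Lemma \ref{technicallemma}, is that when $G=HK$ the quotient $C_K(h_i)/C^\otimes_K(h_i)$ embeds into $J(G,H,K)$ for every representative $h_i$; intersecting over all $h \in H$ this forces $C_K(H)/C^\otimes_K(H)$ to embed into $J(G,H,K)$ as well, so that $|C_K(H)| \le |C^\otimes_K(H)|\,|J(G,H,K)|$, i.e. $|C^\otimes_K(H)|/|C_K(H)| \ge 1/|J(G,H,K)|$. This is exactly the inequality that feeds the lower bound.

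For the \textbf{lower bound}, I would start from inequality $(a)$ of Theorem \ref{boundrelative}, namely
\[
\frac{d(H,K)}{|J(G,H,K)|}+\frac{|C^\otimes_K(H)|}{|H|}\left(1-\frac{1}{|J(G,H,K)|}\right)\le d^\otimes(H,K),
\]
and simply note that the second summand on the left is non-negative, since $|J(G,H,K)|\ge 1$ makes $1-\frac{1}{|J(G,H,K)|}\ge 0$ and clearly $|C^\otimes_K(H)|/|H|\ge 0$. Dropping this term yields $d(H,K)/|J(G,H,K)| \le d^\otimes(H,K)$, which is the left half of the Corollary.

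For the \textbf{upper bound}, the cleanest route is again through Lemma \ref{technicallemma}: since each ratio $|C^\otimes_K(h_i)|/|C_K(h_i)|\le 1$, we get
\[
d^\otimes(H,K)=\frac{1}{|H|}\sum_{i=1}^{k_K(H)}\frac{|C^\otimes_K(h_i)|}{|C_K(h_i)|}\le \frac{k_K(H)}{|H|}=d(H,K).
\]
Alternatively this also follows from inequality $(b)$ once one knows $|C_K(H)|\ge |C^\otimes_K(H)|$ (the bracketed quantity there is then non-negative), but the direct summand-by-summand estimate is more transparent and does not even need the $G=HK$ hypothesis. Finally, for the ``in particular'' clause: if $J(G,H,K)$ is trivial then $|J(G,H,K)|=1$, so the lower bound reads $d(H,K)\le d^\otimes(H,K)$, which combined with the upper bound $d^\otimes(H,K)\le d(H,K)$ gives equality. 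I do not anticipate any genuine obstacle here; the only point requiring a line of care is the passage from the pointwise embeddings $C_K(h_i)/C^\otimes_K(h_i)\hookrightarrow J(G,H,K)$ to the statement about $C_K(H)/C^\otimes_K(H)$, but since $C^\otimes_K(H)=\bigcap_h C^\otimes_K(h)$ and $C_K(H)=\bigcap_h C_K(h)$, this is immediate and in any case the lower bound can be taken verbatim from part $(a)$ without reproving it.
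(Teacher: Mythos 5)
Your proposal is correct and follows the paper's intended route: the paper states this corollary without a separate proof, as an immediate consequence of Theorem \ref{boundrelative}, and your derivation (drop the non-negative summand $\frac{|C^\otimes_K(H)|}{|H|}\bigl(1-\frac{1}{|J(G,H,K)|}\bigr)$ in part $(a)$ for the lower bound, bound each ratio $|C^\otimes_K(h_i)|/|C_K(h_i)|$ by $1$ via Lemma \ref{technicallemma} for the upper bound, then set $|J(G,H,K)|=1$ for the equality clause) is exactly what is intended. The only caveat is your preliminary remark that the pointwise embeddings $C_K(h_i)/C^\otimes_K(h_i)\hookrightarrow J(G,H,K)$ yield, ``by intersecting,'' an embedding of $C_K(H)/C^\otimes_K(H)$ into $J(G,H,K)$ --- that step is not immediate (the natural map on $C_K(H)$ lands in a product of copies of $J(G,H,K)$, one for each $h$) --- but as you note yourself it is not needed, since the lower bound is taken verbatim from part $(a)$.
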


The second main theorem is a result of comparison. Its proof is the following.

\begin{proof}[Proof of Theorem \ref{inequalities}]
We have  \[d^\wedge(H,K) = \frac{1}{|H|} \sum^{k_K(H)}_{i=1}\left| \frac{C^\wedge_K(h_i)}{C_K(h_i)} \right| \le \frac{1}{|H|} \sum^{k_K(H)}_{i=1}\left| \frac{C_K(h_i)}{C_K(h_i)} \right| =d(H,K)\]
and the upper bound follows.

 Now  $k \in C^\wedge_K(H)$ if and only if $k \wedge h = 1$ for all $h \in H$ if and only if $(k \otimes h) \nabla(H \cap K)=\nabla(H \cap K)$ if and only if $k \otimes h \in \nabla(H \cap K)$. This condition is weaker than the condition $k \otimes h = 1$, characterizing the elements of $ C^\otimes_K(H)$. Then $ C^\otimes_K(H)\subseteq C^\wedge_K(H) \subseteq C_K(H)$. This and  Lemma \ref{technicallemma} imply the lower bound
\[d^\otimes(H,K) = \frac{1}{|H|} \sum^{k_K(H)}_{i=1}\left| \frac{C^\otimes_K(h_i)}{C_K(h_i)} \right| \le \frac{1}{|H|} \sum^{k_K(H)}_{i=1}\left| \frac{C^\wedge_K(h_i)}{C_K(h_i)} \right| =d^\wedge(H,K).\]
The rest follows from Corollary \ref{conseq}.
\end{proof}

\end{document}